\documentclass[11pt, a4paper]{amsart} 

\usepackage[english]{babel}
\usepackage{amsmath,amsfonts}
\usepackage{amsthm}
\usepackage{amssymb}
\usepackage[alphabetic]{amsrefs}
\usepackage{url}
\usepackage[all]{xy}
\usepackage{graphicx}
\usepackage{psfrag}
\usepackage{array}
\usepackage{enumerate}
\usepackage{mathtools}
\usepackage{color} %
\usepackage{dsfont} 
\usepackage{mathrsfs}
\usepackage{lipsum}
\usepackage{relsize} 
\usepackage{upgreek} 
\usepackage{caption}
\usepackage{pgf,tikz}
\usetikzlibrary{arrows}
\usepackage{xargs} 

\usepackage{xcolor}
\definecolor{rouge}{rgb}{0.85,0.1,.4}
\definecolor{bleu}{rgb}{0.1,0.2,0.9}
\definecolor{violet}{rgb}{0.7,0,0.8}
\definecolor{noir}{rgb}{0,0,0}
\usepackage[colorlinks=true,linkcolor=bleu,urlcolor=noir,citecolor=rouge]{hyperref} 


\addtolength{\oddsidemargin}{-1cm}
\addtolength{\evensidemargin}{-1cm}
\addtolength{\textwidth}{2cm}
\addtolength{\topmargin}{-1cm}
\addtolength{\textheight}{2cm}

\theoremstyle{theorem}
\newtheorem{theorem}{Theorem}[section]

\newtheorem{proposition}[theorem]{Proposition}

\newtheorem{lemma}[theorem]{Lemma}

\theoremstyle{definition}
\newtheorem{definition}[theorem]{Definition}
\theoremstyle{definition}
\newtheorem{remark}[theorem]{Remark}

\newtheorem{example}[theorem]{Example}


 %

 %
 %
 %
 %

 %
 %
 %
\newcommand{\normal}{\vartriangleleft} %

\newcommand{\bra}{\langle}
\newcommand{\ket}{\rangle}


 %




\newcommand{\wobbling}[1]{\mathscr{W}(#1)}
\newcommand{\borel}[1]{\mathscr{B}(#1)}
\newcommand{\clopen}[1]{\mathscr{C}(#1)}



\renewcommand{\setminus}{\smallsetminus}


\newcommand{\Z}{\mathbf{Z}}

\newcommand{\R}{\mathbf{R}}


%

\begin{document}
\title{The Burnside Problem for Locally Compact Groups}

\author{Thibaut Dumont}
\thanks{Thibaut Dumont has been supported by the Academy of Finland (grant 288501 `\emph{Geometry of subRiemannian groups}') and the European Research Council (ERC Starting Grant 713998 GeoMeG `\emph{Geometry of Metric Groups}'), P.I.\ Enrico Le Donne.}
\address{Department of Mathematics and Statistics, P.O. Box 35 (MaD)\\ FI-40014 University of Jyv\"askyl\"a, Finland}
\email{thibaut.dumont@math.ch}

\author{Thibault Pillon}
\thanks{Thibault Pillon has been supported by the European Research Council consolidator grant 614195.}
\address{Analysis Section, KU Leuven, Celestijnenlaan 200b, Box 2400\\ 3001 Leuven, Beligum}
\email{thibault.pillon@kuleuven.be}

\begin{abstract}
 Using topological notions of translation-like actions introduced by Schneider, we give a positive answer to a geometric version of Burnside problem for locally compact group. The main theorem states that a locally compact group is non-compact if and only if it admits a translation-like action by the group of integers $\Z$. We then characterize the existence of cocompact translation-like actions of $\Z$ or non-abelian free groups on a large class of locally compact groups, improving on Schneider's results and generalising Seward's.
\end{abstract}

\renewcommand{\keywordsname}{Key words}
\keywords{Burnside Problem, Locally Compact Groups, Translation-Like Actions, Von-Neumann Problem, Wobbling groups} 

\maketitle
\setcounter{tocdepth}{1} 
{ 
}

\section{Introduction}

Several problems in geometric group theory ask whether properties of a group can be formulated in terms of subgroup containment. The Day-von Neumann problem asks whether a group is non-amenable if and only if it contains a non-abelian free subgroup. The general Burnside problem, which is the simplest form of the problem, asks whether a finitely generated group is infinite if and only if it contains infinite cyclic subgroups. The so-called Gersten conjecture, formulated by Gromov, loosely asks whether any non-hyperbolic group contains a Baumslag-Solitar group $\operatorname{BS}(m,n)$. Similarly, a sufficient condition for a group to have exponential growth is to possess a free subsemigroup and it is very natural to wonder if the converse also holds.

However, all these questions have been answered by the negative. Ol'shanskii \cite{Olshanskii} proved the existence  of non-amenable groups not containing free subgroups. Together with Sapir \cite{OlshanskiiSapir}, they extended this result to finitely presented groups and later Monod \cite{Monod} provided the first explicit examples. Golod and Shafarevich \cite{GolodSafarevic} provided the first counter-examples to the general Burnside problem and more examples, such as Grigorchuk's group of intermediate growth are now very well understood. It follows from the work of Brady \cite{Brady} concerning non-hyperbolic finitely presented subgroups of hyperbolic groups that the Gersten conjecture is false.
Finally, it is part of folklore in the field that there exist torsion groups of exponential growth, for example a wreath product of the Grigorchuk group with $\Z/2\Z$ does the job.

Despite these negative answers, a lot of positive answers were obtained in essentially two ways, restricting to a smaller class of groups or weakening the notion of subgroup containment. The famous Tits alternative gives a positive and strong solution to the Day-von Neumann problem for the class of finitely generated linear groups and has been extended to many classes of groups included e.g. mapping class groups and subgroups of $\operatorname{Out}(F_n)$ \cite{Tits,Ivanov,McCarthy,BestvinaFeighnHandel}. Rickert~\cite{Rickert} already observed that the Day-von Neumann problem holds in the class of almost connected groups.
Gersten's conjecture was actually named in reference to Gersten's theorem \cite{Gersten} showing that hyperbolicity pass to finitely presented subgroups of groups of cohomological dimension 2. In particular they cannot contain Baumslag--Solitar groups. 
 Finally, Chou \cite{Chou} proved that elementary amenable groups have exponential growth if and only if they contain free subsemigroups.

A positive answer to a measure theoretic weakening of the Day--von Neumann problem arised from the work of Gaboriau and Lyons \cite{GaboriauLyons} and was generalized to locally compact groups by Gheysens and Monod \cite{GheysensMonod}.

Our main concern in this article are the so-called geometric versions of the Day-Von Neumann and Burnside problems. Whyte \cite{Whyte} introduced the notion of translation-like actions as a weak notion of subgroup containment.

\begin{definition} A free action of a group $G$ on a metric space $X$ is called \emph{translation-like} if for every $g\in G$ there exists $C\ge0$ such that $d_X(x,gx)\le C$ for all $x\in X$.
\end{definition}

Permutations satisfying the condition in this definition are called \emph{wobbling transformations} of $X$ and form a group $\wobbling{X}$. Groups of wobbling transformations or \emph{wobbling groups} have attracted a lot of interest in recent works (see e.g. \cite{MonodJuschenko,JuschenkoDeLaSalle}). Whyte used this notion to prove the following geometric analogue of the Day-von Neumann statement. 

\begin{theorem}[Whyte {\cite[Theorem 6.1]{Whyte}}]
A finitely generated group $G$ is non-amenable if and only if it admits a translation-like action of  the free group on $d$ generators $F_d$ for some (equivalently all) $d\ge2$.
\end{theorem}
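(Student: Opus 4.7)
The plan is to prove the two directions separately. Since $F_2$ embeds in $F_d$ and $F_d$ embeds in $F_2$ for every $d\ge 2$, and since the restriction of a translation-like action along a subgroup embedding is again translation-like, it suffices to prove the equivalence for $d=2$; the equivalence of the ``some'' and ``all'' quantifiers then follows automatically.

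For the direction $(\Leftarrow)$, assume $G$ carries a free, translation-like $F_2$-action. The classical von Neumann--Tarski paradoxical decomposition of $F_2$ transports along a choice of section for the free action to a paradoxical decomposition of $G$ whose pieces are shuffled by wobbling bijections of $G$. I would then invoke the standard fact that $G$ is amenable if and only if no paradoxical decomposition of $G$ by elements of $\wobbling{G}$ exists: if $(F_n)$ were a F\o lner sequence for $G$, decomposing each wobbling bijection as a finite union of partial translations of bounded displacement shows that the densities of the ``paradoxical'' pieces along $F_n$ must balance asymptotically, which is incompatible with the paradox. Hence $G$ is non-amenable.

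For the direction $(\Rightarrow)$, suppose $G$ is non-amenable. The strategy is to build two wobbling bijections $\alpha,\beta\in\wobbling{G}$ generating a copy of $F_2$ acting freely on $G$. The crucial tool is the combinatorial characterization of non-amenability (due to Gromov, revisited by Ceccherini-Silberstein, Grigorchuk and de la Harpe): $G$ is non-amenable if and only if for some $r>0$ the bipartite graph with one vertex class $G$ and the other $G\sqcup G$, with edges given by $d(x,y)\le r$, satisfies Hall's marriage condition. The Hall--Rado theorem for infinite locally finite bipartite graphs then produces a bounded-displacement bijection between $G$ and $G\sqcup G$; its two ``halves'' yield wobbling injections $\alpha,\beta\colon G\to G$ whose images partition $G$.

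The main obstacle is to guarantee that $\alpha$ and $\beta$ generate a genuinely \emph{free} copy of $F_2$ inside $\wobbling{G}$ acting freely on $G$, not merely two bounded-displacement bijections with disjoint images. This is arranged by a ping-pong argument: by iterating the Hall construction one refines the situation so that there exist four pairwise disjoint ``sectors'' $X_\alpha^\pm,X_\beta^\pm\subset G$ with $\alpha^{\pm 1}$ mapping the complement of $X_\alpha^\mp$ into $X_\alpha^\pm$, and similarly for $\beta^{\pm 1}$. The classical ping-pong lemma then implies $\langle\alpha,\beta\rangle\cong F_2$ and that the corresponding action is free. Extracting such a ping-pong configuration is the technical heart of the proof: it requires using non-amenability not merely to produce a single bounded-displacement doubling $G\to G\sqcup G$, but enough ``room'' in $G$ to separate four disjoint sectors while keeping every induced generator wobbling.
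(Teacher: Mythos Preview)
The paper does not supply its own proof of this statement; Whyte's theorem is quoted as background and attributed to \cite[Theorem~6.1]{Whyte} without argument. There is therefore no in-paper proof to compare your proposal against.

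On the merits of your sketch: the reduction to $d=2$ and the $(\Leftarrow)$ direction are fine. For $(\Rightarrow)$ there is a genuine gap. The Hall matching between $G$ and $G\sqcup G$ produces two wobbling \emph{injections} $\alpha,\beta\colon G\hookrightarrow G$ whose images partition $G$; these are not bijections, hence not elements of $\wobbling{G}$, and cannot literally generate a subgroup there. Your proposed repair---``iterating the Hall construction'' to manufacture four ping-pong sectors $X_\alpha^\pm,X_\beta^\pm$ and genuine wobbling bijections respecting them---is asserted rather than carried out, and this is exactly the nontrivial step. Note also that the ping-pong lemma, as usually stated, certifies that $\langle\alpha,\beta\rangle$ is free as an abstract group; freeness of the \emph{action} on all of $G$ is a further (if modest) verification you have not addressed.

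For comparison, the route Whyte takes, and the one the present paper follows when proving the cocompact locally compact analogue in Section~4, bypasses ping-pong entirely: non-amenability of the Cayley graph is used to produce a spanning forest whose components are $2d$-regular trees with edges of uniformly bounded length, and labelling each component as a Cayley graph of $F_d$ yields the free translation-like action directly.
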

He then asked whether the Burnside problem and the Gersten conjecture admit posi\-
tive answers in this setting. Seward \cite{Seward} came up with a solution to first problem.

\begin{theorem}[Seward {\cite[Theorem 1.4]{Seward}}]\label{Seward}
A finitely generated group $G$ is infinite if and only if it admits a translation-like action of $\Z$.
\end{theorem}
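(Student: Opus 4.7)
The forward implication is immediate: a translation-like action is by definition free, so every $\Z$-orbit is in bijection with $\Z$, forcing $G$ to be infinite.

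For the converse, fix a finite symmetric generating set $S$ of $G$ and let $\Gamma$ denote the associated Cayley graph, with word metric $d_S$. A translation-like action of $\Z$ on $G$ is equivalent to a bijection $\phi\colon G\to G$ with $\sup_g d_S(g,\phi(g))<\infty$ and every orbit infinite. By enlarging $S$ to a ball $S\cup S^2\cup\cdots\cup S^N$ if necessary, we may assume $d_S(g,\phi(g))\le 1$ for every $g$, so the task reduces to producing a spanning $2$-regular subgraph $H$ of $\Gamma$ whose connected components are all bi-infinite lines (equivalently, $H$ has no finite cycle); orienting each such component then defines the desired $\phi$.

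My plan is to build $H$ inductively. Enumerate $G=\{g_1,g_2,\ldots\}$ and construct nested finite edge-sets $E_n\subseteq E(\Gamma)$ satisfying: (i) every vertex in $\{g_1,\ldots,g_n\}$ has degree exactly $2$ in $E_n$; (ii) the subgraph $(V(\Gamma),E_n)$ contains no finite cycle; (iii) the loose endpoints (vertices of degree strictly less than $2$) lie far outside $\{g_1,\ldots,g_n\}$. At stage $n\to n+1$, if $g_{n+1}$ already has degree $2$ in $E_n$ nothing is done; otherwise finitely many edges are appended so that $g_{n+1}$ attains degree $2$, each new loose endpoint being a fresh vertex outside a prescribed finite region. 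The union $H=\bigcup_n E_n$ then satisfies all three requirements.

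The main obstacle is to verify that the inductive step can always be carried out while preventing finite cycles. This rests on a graph-theoretic fact using both that $G$ is infinite and that $\Gamma$ is locally finite and connected: any finite collection of disjoint finite paths in $\Gamma$ admits a simultaneous extension by one edge at each designated loose end, to pairwise distinct new vertices lying outside any prescribed finite set. The existence of such a joint extension follows from a Hall's marriage condition, which is satisfied because, in the infinite connected graph $\Gamma$, the iterated neighborhoods of loose endpoints avoiding the current finite region have unbounded size. Once $H$ is produced, a consistent orientation along each of its bi-infinite components yields the wobbling bijection $\phi$ with only infinite orbits, and therefore the desired translation-like $\Z$-action.
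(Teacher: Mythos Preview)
First, note that the paper does not supply its own proof of this statement: it is quoted as a result of Seward and then used as a black box (the paper invokes \cite[Corollary~3.4]{Seward} inside the proof of Theorem~\ref{ThmMain}). So there is no in-paper argument to compare against; I comment on your attempt directly.

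The forward implication is fine. For the converse, your inductive scheme has a genuine gap at the step where $g_{n+1}$ has degree~$0$ in $E_n$. Your Hall-type lemma is about extending \emph{loose endpoints}, and by invariant~(iii) those lie far from the processed region, so they have untouched neighbours available. But $g_{n+1}$ is in general not a loose endpoint, and nothing in (i)--(iii) prevents all of its $\Gamma$-neighbours from already having degree~$2$ as interior vertices of paths built earlier. In that situation no edge at $g_{n+1}$ can be appended without raising some neighbour's degree to~$3$, and since you insist on $E_n\subseteq E_{n+1}$ the construction stalls. Concretely, in $\Z^2$ with the standard generators a single path can wind around the origin so that $(0,0)$ has degree~$0$ while all four of $(\pm1,0),(0,\pm1)$ have degree~$2$; if the enumeration processes these four vertices before $(0,0)$, your step fails. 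Enlarging $S$ to $S\cup\cdots\cup S^N$ beforehand does not help, because $N$ is fixed once and for all whereas the same obstruction reappears at arbitrarily large scales as the construction progresses.

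Seward's actual argument does not proceed by a monotone greedy construction on an arbitrary enumeration. The core of his proof is a careful construction of bi-infinite Hamiltonian paths in suitable Cayley graphs (organised via the end structure of the group), in which partial paths may be \emph{rerouted} and not merely extended. The freedom to revise earlier edge choices is exactly what your nested scheme $E_1\subseteq E_2\subseteq\cdots$ lacks, and it is essential for getting past trapped vertices of the kind described above.
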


He was also able to characterize the existence of transitive translation-like actions of both $\Z$ and $F_d$.

\begin{theorem}[Seward {\cite[Theorem 1.5]{Seward}}]\label{SewardTransitive}
Let $G$ be a finitely generated infinite group. Then:
\begin{enumerate}
\item $G$ admits a transitive translation-like action by $\Z$ if and only if $G$ is one-ended or two-ended.
\item $G$ admits a transitive translation-like action by $F_d$ for every $d\ge 2$ if and only if $G$ is not amenable.
\end{enumerate}
\end{theorem}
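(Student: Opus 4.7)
I would prove the two statements independently, each in both directions.

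For statement (1), the forward direction is an end-counting argument. If $\phi \in \wobbling{G}$ generates a transitive free $\Z$-action with $d(x,\phi(x)) \le C$ for all $x$, then $(\phi^n(e))_{n \in \Z}$ enumerates $G$ via steps of length at most $C$. For any finite $F \subset G$, only finitely many $n$ satisfy $\phi^n(e) \in F$, so the complement $G \setminus F$ lies within the two ``tails'' of this sequence, each traced by a $C$-Lipschitz ray; this forces $G$ to have at most two ends.

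For the backward direction of (1), the two-ended case reduces to $G$ being virtually $\Z$ (by Stallings' theorem), so one picks a generator of a finite-index cyclic subgroup and enumerates cosets. The one-ended case is the heart of the matter: I would fix a finite symmetric generating set and build an exhaustion of the Cayley graph by finite connected subgraphs $F_1 \subset F_2 \subset \cdots$ whose complements stay connected (possible by one-endedness), then construct Hamiltonian paths $P_n$ in $F_n$ inductively, extending $P_n$ to $P_{n+1}$ by joining a Hamiltonian path on the frontier $F_{n+1} \setminus F_n$ to the endpoints of $P_n$ with bounded-displacement edges. I expect the main obstacle to be controlling the endpoints across inductive steps so that the concatenation remains $C$-Lipschitz; one-endedness is used precisely to guarantee enough routing freedom on the frontier to reroute endpoints around any prescribed finite obstruction.

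For statement (2), the forward direction is immediate from Whyte's theorem cited above. For the backward direction, assume $G$ is non-amenable. The general strategy is to combine Hall's marriage theorem on the bipartite ``wobbling graph'' on $G \sqcup G$ (with edges $x \sim y$ iff $d(x,y) \le C$) with the expansion coming from non-amenability (equivalently, Tarski's paradoxical decomposition) to assemble $d$ bijections $\phi_1, \ldots, \phi_d \in \wobbling{G}$ generating a free $F_d$-action on $G$. Transitivity requires an additional step, either obtaining the action directly along a spanning bi-infinite $(2d)$-regular tree in $G$, or first producing the possibly non-transitive action from Whyte's theorem and then gluing distinct $F_d$-orbits together by wobbling bijections. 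I expect the main technical issue to be achieving freeness, transitivity, and bounded displacement simultaneously; the cleanest route is to organise the construction as producing a bounded-displacement covering map $G \to F_d \bs G$, so that freeness and transitivity are built in by design.
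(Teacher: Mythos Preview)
The paper does not contain a proof of this statement: Theorem~\ref{SewardTransitive} is quoted from Seward's paper and used as a black box. The only places in the present paper where the underlying mechanism is visible are the proofs of Theorem~\ref{ThmMain} and of the cocompact analogue in Section~4, and there the authors simply invoke \cite[Corollary~3.4]{Seward} for part~(1) and, for part~(2), cite Schneider's covering by a $k$-regular forest together with \cite[Theorem~5.5]{Seward} to pass to a spanning $2d$-regular tree, which is then read as a transitive $F_d$-action. So there is nothing here to compare your argument against line by line.

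That said, your outline is broadly in the spirit of Seward's approach, with one real gap. In the one-ended backward direction of~(1), your plan is to build Hamiltonian paths on an exhaustion $F_1\subset F_2\subset\cdots$ and concatenate. The difficulty you flag---controlling endpoints so that successive paths can be spliced with bounded displacement---is genuine, and as stated your scheme does not address it: an arbitrary finite connected subgraph of a Cayley graph need not admit a Hamiltonian path at all, and even when it does there is no reason its endpoints lie near the boundary. Seward's actual argument does not attempt to Hamiltonise arbitrary $F_n$; rather, he works with a specific generating set (essentially passing to a power of the original one) so that the relevant annuli are not just connected but sufficiently ``thick'' to allow rerouting, and then performs local surgeries on a collection of disjoint bi-infinite paths to merge them into one. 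Your sketch would need a comparable mechanism before it becomes a proof.

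For part~(2), your two proposed routes are both viable in principle, but the one the paper (implicitly, via its LC version) and Seward actually take is the spanning-tree route: non-amenability gives a spanning forest all of whose vertices have degree $\ge 3$ and bounded above, and any such tree is bilipschitz to the $2d$-regular tree, which \emph{is} a free transitive $F_d$-orbit. Your alternative of first producing a non-transitive Whyte action and then ``gluing orbits by wobbling bijections'' is not obviously workable: merging infinitely many $F_d$-orbits while preserving freeness and bounded displacement is exactly the hard part, and you have not indicated how to do it.
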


However, counter-examples to both directions of the geometric Gersten conjecture were exhibited by Jiang~\cite{Jiang} and Cohen~\cite{Cohen}.

Recently, Schneider defined two suitable notions of translation-like actions for actions on locally compact groups (see Section \ref{Preliminaries} for a precise definition). He was able to prove the following generalization of Whyte's result.

\begin{theorem}[Schneider {\cite[Theorem 1.4]{Schneider}}]\label{Schneider}
Let $G$ be a locally compact group. The following are equivalent.
\begin{enumerate}
\item $G$ is not amenable.
\item $G$ admits a Borel (or equivalently clopen) translation-like action by  $F_d$ for all $d\ge2$.
\end{enumerate}
\end{theorem}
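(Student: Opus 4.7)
My plan is to prove the equivalence by treating each implication separately. For the direction $(2)\Rightarrow(1)$, I would argue by contrapositive using a standard pull-back of the free-group paradox. Given a Borel translation-like action $\alpha\colon F_2 \curvearrowright G$, the classical partition of $F_2$ into the four sets $W_s$ of reduced words beginning with each letter $s\in\{a,a^{-1},b,b^{-1}\}$ (together with the singleton $\{1\}$) pulls back along the orbit map $f\mapsto \alpha(f)(1_G)$ to a Borel partition of $G$. The relations $F_2 = aW_{a^{-1}}\sqcup W_a = bW_{b^{-1}}\sqcup W_b$ then translate, via the action, into two reassemblies of $G$ by Borel pieces that each cover $G$ once and together cover $G$ twice. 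Because $\alpha$ is translation-like, the reassembly maps are wobbling Borel bijections, so we obtain a Borel paradoxical decomposition of $G$ by wobbling pieces. Invoking the locally compact analogue of Tarski's theorem --- non-amenability is equivalent to the existence of a Borel wobbling-paradoxical decomposition --- this contradicts amenability.

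For the converse $(1)\Rightarrow(2)$, suppose $G$ is non-amenable. The same Tarski-type theorem then provides a Borel partition $G = A_1 \sqcup A_2 \sqcup B_1 \sqcup B_2$ together with wobbling Borel bijections $\varphi_a\colon G \to A_1\sqcup A_2$ and $\varphi_b\colon G \to B_1\sqcup B_2$. The goal is to upgrade these data into two wobbling Borel bijections $\Phi_a,\Phi_b$ of $G$ that generate a free action of $F_2$. The natural strategy is a ping-pong construction: arrange $\Phi_a$ so that it sends $G\setminus A_1$ into $A_1$ and sends $A_2$ out of $A_1$, and symmetrically for $\Phi_b$ with respect to the $B_i$'s, iterating the paradoxical construction within each half to expose the nested attracting sets required by ping-pong. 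Every nontrivial reduced word in $\Phi_a,\Phi_b$ then moves each point into one of these proper Borel subsets, and hence has no fixed point; together with $\Phi_a,\Phi_b$ being wobbling, this produces the desired translation-like action of $F_d$ (the case $d\ge 3$ reducing immediately to $d=2$ since $F_2$ contains $F_d$).

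The main obstacle is the effective (Borel, respectively clopen) realization of this ping-pong construction. Two points require care: (i) producing the paradoxical bijections and their iterated refinements by a Borel version of the Hall--K\"onig matching theorem while keeping the edges of bounded length in a compatible word metric on $G$, so that wobbling is preserved; and (ii) ensuring freeness of the joint $F_2$-action rather than freeness of each generator alone, which is precisely what the nested ping-pong structure is designed to achieve. Step (i) can be handled with standard descriptive-set-theoretic tools (Borel graph matching, measurable selection), since the bipartite graphs encoding the paradoxical decomposition have uniformly bounded edge length with respect to any compact generating set. The clopen strengthening would then come from first reducing to the compactly generated case by exhausting $G$ with open compactly generated subgroups, passing to an associated Cayley--Abels graph, and importing Seward-type combinatorial matching arguments in this finite-valence setting; the passage back to $G$ preserves clopenness because it involves only countable unions of clopen pieces inside open subgroups.
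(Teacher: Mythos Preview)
The paper does not contain a proof of this theorem: it is quoted verbatim from Schneider~\cite{Schneider} as background, and the authors use it as a black box (see the introduction and the proof of Theorem~\ref{SewardTransitiveLC}(2), which explicitly says ``Most of the statement is covered by Schneider's Theorem~\ref{Schneider}''). So there is no in-paper proof to compare your proposal against.

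That said, two comments on the proposal itself. For $(2)\Rightarrow(1)$, your pull-back of the $F_2$-paradox is the right idea, but the map you describe, $f\mapsto\alpha(f)(1_G)$, only parametrises a single orbit; to partition all of $G$ you must use the measurable fundamental domain $D$ guaranteed by Definition~\ref{Def:Piecewise-translation}(3) and set $A_s=\bigcup_{w\in W_s}\alpha(w)(D)$. With that correction the argument goes through, since Borel piecewise-translations preserve a right Haar measure.

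For $(1)\Rightarrow(2)$, your outline is honest about where the difficulty lies, but it does not resolve it: passing from a single Borel paradoxical decomposition to a \emph{free} $F_2$-action by wobblings is exactly the hard step, and the ``iterate the paradox inside each half and run ping-pong'' sketch hides a genuine construction (controlling the iterated matchings so that the resulting bijections stay wobbling \emph{and} jointly generate a free action). From the hints in the present paper --- the remark that Rickert's result plays in Schneider's proof the role that Proposition~\ref{Dichotomy} plays here, together with the use of \cite[Prop.~2.1, Thm~2.2]{Schneider} in Section~4 --- Schneider's actual argument appears to be structural rather than a direct Tarski inversion: reduce via the almost-connected case (Rickert) and, in the presence of a compact open subgroup, work on a Cayley--Abels graph and lift, much as the present paper does for $\Z$. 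Your Tarski/ping-pong route may be completable, but as written it is a plan rather than a proof, and it diverges from the approach the literature (and this paper) takes.
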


In the present paper, we use Schneider's notion to give a positive answer to the geometric Burnside problem for locally compact groups.

\begin{theorem}[Main Theorem]\label{ThmMain}
Let $G$ be a compactly generated locally compact group. The following are equivalent:
\begin{enumerate}
\item $G$ is non-compact.
\item $G$ admits a Borel (or equivalently clopen) translation-like action by $\Z$.
\end{enumerate}
\end{theorem}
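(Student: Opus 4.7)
The plan is to combine Schneider's framework of Borel translation-like actions on locally compact groups with Seward's discrete result (Theorem~\ref{Seward}), using a Borel cross-section of $G$ as the bridge between the locally compact and discrete settings.

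The direction $(2)\Rightarrow(1)$ should follow from elementary features of Schneider's formalism. A Borel translation-like action of $\Z$ produces infinite free orbits in $G$ whose consecutive elements differ by an element of a fixed compact subset of $G$; I expect that combining this with the clopen partition structure underlying Schneider's definition and a short counting or measure argument rules out $G$ being compact, making this implication essentially a sanity check of the definitions.

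For the main direction $(1)\Rightarrow(2)$, fix a compact symmetric generating set $S\subset G$, yielding a proper left-invariant pseudo-metric $d$ on $G$. Using a Borel selection argument in this $\sigma$-compact setting, extract a Borel subset $L\subset G$ that is uniformly $d$-separated and coarsely dense, in the sense that $G=LK$ for some compact $K\subset G$. Since $G$ is non-compact, $L$ is countably infinite; equipped with the restriction of $d$, it is uniformly locally finite and coarsely equivalent to $G$. Declaring $l\sim l'$ whenever $d(l,l')\le R$ for sufficiently large $R$ turns $L$ into a connected infinite bounded-degree graph. One then applies (the proof of) Seward's Theorem~\ref{Seward}, whose combinatorial construction in fact produces a wobbling $\Z$-action on any connected infinite graph of bounded vertex degree, yielding a free $\Z$-action $\alpha$ on $L$ by bijections of uniformly bounded $d$-displacement.

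Finally, lift $\alpha$ to $G$: choose a Borel (ideally clopen) fundamental domain $F$ for the cross-section $L$, e.g.\ via a Voronoi-type construction, obtaining a Borel partition $G=\bigsqcup_{l\in L} lF$ and a unique factorization $g=l(g)f(g)$. Define $n\cdot g := \alpha_n(l(g))\,f(g)$. This action is Borel, free, and its cocycle $g\mapsto g^{-1}(n\cdot g)=f(g)^{-1}l(g)^{-1}\alpha_n(l(g))f(g)$ lies in a fixed compact subset of $G$. The main obstacle will be this lifting step: one must verify that the resulting action falls under Schneider's exact Borel (respectively clopen) definition of a translation-like action, which will likely require refining $F$ to be clopen rather than merely Borel and carefully tracking measurability and cocycle bounds throughout the construction.
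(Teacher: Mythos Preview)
Your proposal has a genuine gap in the lifting step for $(1)\Rightarrow(2)$. Schneider's Definition~\ref{Def:Piecewise-translation} requires a Borel piecewise-translation to be built from a \emph{finite} partition $\mathscr{P}$ of $G$, with the map acting as a right translation $x\mapsto xg_P$ by a \emph{fixed} element $g_P$ on each piece. Your lift $n\cdot g=\alpha_n(l(g))\,f(g)$ does not have this form: on the piece $lF$ it acts as $x\mapsto \alpha_n(l)l^{-1}x$, a \emph{left} translation. Reversing the factorization to $g=f(g)l(g)$ repairs this, giving $x\mapsto x\cdot l^{-1}\alpha_n(l)$ on $Fl$, but the deeper obstruction remains: the partition $\{Fl:l\in L\}$ is countably infinite, and the set of translation values $\{l^{-1}\alpha_1(l):l\in L\}$ lies in a compact set but is in general infinite. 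The condition you check, that the cocycle ``lies in a fixed compact subset of $G$'', is the metric wobbling condition of Definition~\ref{def:metricwobbling}; for non-discrete $G$ this is strictly weaker than being a piecewise-translation in Schneider's sense, so your lifted action is not a Borel TL-action as required.

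The paper's proof is organised precisely around this obstruction. The finite-partition requirement can be met only when one has a compact \emph{open} subgroup $K$: then $G/K$ is discrete, the Cayley--Abels graph has finitely many edge types, and Seward's action on $G/K$ involves only finitely many translation values. Even then, turning a map of cosets $gK\mapsto gsK$ into a genuine right piecewise-translation on $G$ is nontrivial and needs unimodularity (Lemma~\ref{Refinement}). Accordingly the paper first invokes structure theory (Proposition~\ref{Dichotomy}) to reduce to the compact-open-subgroup case; it then disposes of the non-unimodular and infinitely-ended subcases by exhibiting a discrete copy of $\Z$ directly (via the modular function, respectively Stallings' splitting), and applies the Cayley--Abels lift (Proposition~\ref{CayleyAbelsLift}) only in the remaining unimodular, at-most-two-ended case. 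Your cross-section approach with an arbitrary compact $K$ tries to bypass this case analysis, but at the cost of producing an action that does not satisfy Schneider's definition.
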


The proof relies on the fact that non compact almost-connected groups always contain discrete infinite cyclic subgroups. It serves a similar purpose in our proof as Rickert's result in Schneider's proof of Theorem \ref{Schneider}. In conjonction with van Dantzig's theorem, we get the following proposition 

 \begin{proposition}\label{Dichotomy} Let $G$ be a locally compact group. At least one of the following holds:
\begin{enumerate}[(i)]
\item $G$ contains a compact open subgroup.
\item $G$ contains a discrete infinite cyclic subgroup.
\end{enumerate}
\end{proposition}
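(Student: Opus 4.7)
The plan is to reduce to the almost-connected case by means of van Dantzig's theorem, and then invoke the structural fact flagged in the paragraph preceding the proposition, namely that every non-compact almost-connected locally compact group contains a discrete infinite cyclic subgroup.

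Concretely, I would first consider the connected component of the identity $G^0$, a closed normal subgroup of $G$. The quotient $G/G^0$ is totally disconnected and locally compact, so van Dantzig's theorem provides a compact open subgroup $L \leq G/G^0$. Letting $\pi\colon G\to G/G^0$ be the canonical projection, I set $K:=\pi^{-1}(L)$. Then $K$ is an open subgroup of $G$ (as the preimage of an open set), it contains $G^0$, and $K/G^0\simeq L$ is compact; hence $K$ is an almost-connected locally compact group. The argument now splits into two cases. If $K$ is compact, then $K$ itself is a compact open subgroup of $G$ and condition (i) holds. Otherwise $K$ is non-compact and almost-connected, so by the quoted input it contains a discrete infinite cyclic subgroup $\langle g\rangle$. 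Since $K$ is open in $G$, any subgroup that is discrete in $K$ is discrete in $G$, so $\langle g\rangle$ witnesses condition (ii).

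The only non-trivial ingredient here is the fact that a non-compact almost-connected locally compact group admits a discrete infinite cyclic subgroup. This is the genuine obstacle and requires the Gleason--Yamabe structure theory: one quotients by a compact normal subgroup to reduce to a non-compact Lie group with finitely many components, and then exhibits a discrete $\Z$ either in the (non-compact) component group or by choosing a one-parameter subgroup whose closure is not compact and contains a copy of $\R$, hence of $\Z$. Since the paper explicitly isolates this result as the key input, I would not reprove it but simply cite it; the remainder of the proof of Proposition~\ref{Dichotomy} is essentially a bookkeeping argument with van Dantzig's theorem.
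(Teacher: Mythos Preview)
Your proof is correct and follows essentially the same approach as the paper's: both obtain, via van Dantzig applied to $G/G^0$, an open almost-connected subgroup of $G$, and then invoke Theorem~\ref{thm:almostconnected} on that subgroup when it fails to be compact. Your write-up merely spells out more explicitly how the almost-connected open subgroup arises as the preimage of a compact open subgroup of $G/G^0$, and records why discreteness in $K$ passes to $G$.
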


We provide a proof essentially observing that all necessary ingredients are already present in the work of Rickert \cite{Rickert}. A proof by Gaillard and Karai is discussed on a MathOverflow entry ( \cite{Gaillard}).

Compact generation in Theorem \ref{ThmMain} is a natural assumption generalising finite generation in Seward's result, however the theorem admits a more general statement to arbitrary locally compact groups.  A locally compact group is \emph{locally elliptic} if every compact subset is contained in a compact subgroup (see Section 4.D of \cite{CornulierdelaHarpe}). In the case of discrete groups, this definition coincide with the classical notion of local finiteness. We get the following statement:

\begin{theorem}\label{ThmLocallyElliptic}
Let $G$ be a locally compact group. The following are equivalent:
\begin{enumerate}
\item $G$ is not locally elliptic.
\item $G$ admits a Borel (or equivalently clopen) translation-like action by $\Z$.
\end{enumerate}
\end{theorem}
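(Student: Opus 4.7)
My plan is to deduce Theorem~\ref{ThmLocallyElliptic} from Theorem~\ref{ThmMain} by passing translation-like actions between $G$ and a suitable open compactly generated subgroup.

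For the direction (2)$\Rightarrow$(1), suppose $\alpha\colon\Z\curvearrowright G$ is a Borel translation-like action and pick a compact set $K\subset G$ containing the displacement set $\{g^{-1}\alpha(1)(g):g\in G\}$. Set $H:=\overline{\langle K\rangle}$, which is a compactly generated closed subgroup of $G$. Since $\alpha(1)(g)\in gK\subset gH$ for every $g\in G$, each $\alpha$-orbit stays in a single left coset of $H$; in particular $\alpha|_H$ is a well-defined free Borel $\Z$-action on $H$ with displacement set contained in $K\subset H$. Applying the (2)$\Rightarrow$(1) direction of Theorem~\ref{ThmMain} to $H$ forces $H$ to be non-compact, which is incompatible with $G$ being locally elliptic (since that would force every compactly generated subgroup of $G$ to be relatively compact).

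For the converse (1)$\Rightarrow$(2), the first step is to produce a non-compact open compactly generated subgroup $H\le G$. Any locally compact group is a directed union of such subgroups: fixing a compact identity neighbourhood $V$, the subgroup $\langle V\cup S\rangle$ generated by $V$ and any compact set $S$ is clopen and compactly generated. Since $G$ fails to be locally elliptic, there is a compact set $K$ not contained in any compact subgroup of $G$; the open compactly generated subgroup $H:=\langle K\cup V\rangle$ must then be non-compact, for otherwise $H$ itself would be a compact subgroup swallowing $K$. Theorem~\ref{ThmMain} applied to $H$ yields a Borel (respectively clopen) translation-like action $\beta\colon\Z\curvearrowright H$ with compact displacement set $K_\beta\subset H$.

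The final step is to extend $\beta$ from $H$ to $G$. Under the standing second countability assumption the open subgroup $H$ has only countably many cosets in $G$, so they partition $G$ into countably many clopen pieces; pick a transversal $T\subset G$ (countable, hence automatically Borel) and write every $g\in G$ uniquely as $g=t_g h_g$ with $t_g\in T$ and $h_g\in H$. Setting
\[
\alpha(n)(g):=t_g\,\beta(n)(h_g)
\]
defines a free $\Z$-action preserving each coset $tH$, and since the maps $g\mapsto t_g$ and $g\mapsto h_g$ are continuous on the clopen partition, $\alpha$ is Borel (a homeomorphism in the clopen case) whenever $\beta$ is. A direct calculation gives $g^{-1}\alpha(1)(g)=h_g^{-1}\beta(1)(h_g)\in K_\beta$, so the displacement set of $\alpha$ remains the compact set $K_\beta$. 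The main point to watch in the whole argument is this transversal construction: openness of $H$ together with second countability of $G$ reduces selection to a countable partition by clopens, and avoids any descriptive-set-theoretic complication.
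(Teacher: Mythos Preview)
Your strategy mirrors the paper's almost exactly: for (2)$\Rightarrow$(1) you restrict the action to the subgroup generated by the displacement set and invoke Theorem~\ref{ThmMain}, and for (1)$\Rightarrow$(2) you find a non-compact open compactly generated subgroup, apply Theorem~\ref{ThmMain} there, and propagate the action along a transversal of the cosets. Two points deserve attention, though.

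First, there is no ``standing second countability assumption'' in this paper, and none is needed. You yourself observe that $g\mapsto t_g$ and $g\mapsto h_g$ are continuous (the former being locally constant on the open cosets of $H$); this alone makes each piece $TP$ of the extended partition equal to the preimage of the Borel (resp.\ clopen) set $P\subset H$ under the continuous map $g\mapsto h_g$, hence Borel (resp.\ clopen) in $G$, and likewise makes the candidate fundamental domain $TD_\beta$ measurable. Appealing to countability of $T$ is a detour that unnecessarily restricts the scope of the theorem.

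Second, in both directions you neglect the measurable-fundamental-domain requirement, which in this paper is part of the definition of a Borel TL-action (Definition~\ref{Def:Piecewise-translation}(3)) and is precisely what excludes, e.g., a dense copy of $\Z$ inside a compact torus. For (2)$\Rightarrow$(1) you should note that if $D$ is a measurable fundamental domain for $\alpha$ on $G$, then $D\cap H$ serves for $\alpha|_H$; for (1)$\Rightarrow$(2), that $TD_\beta$ (again, the preimage of $D_\beta$ under $g\mapsto h_g$) serves for the extended action. Both fixes are immediate, but without them the appeals to Theorem~\ref{ThmMain} are, strictly speaking, incomplete.
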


Finally we are able to generalise Theorem \ref{SewardTransitive} to a large class of groups using cocompactness as an analogue of transitivity. 

\begin{theorem}\label{SewardTransitiveLC}
Let $G$ be a compactly generated non-compact locally compact group. Assume moreover that $G$ admits a compact open subgroup. Then the following hold.
\begin{enumerate}
\item $G$ admits a cocompact Borel (or equivalently clopen) translation-like action by $\Z$ if and only if $G$ is one-ended or two-ended.
\item $G$ admits a cocompact Borel (or equivalently clopen) translation-like action by $F_d$ for every $d\ge 2$ if and only if $G$ is not amenable.
\end{enumerate}
\end{theorem}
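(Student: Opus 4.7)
The plan is to reduce the statement to an analogous problem for the Cayley-Abels graph $X = G/K$, where $K$ is a compact open subgroup and $\Sigma \supseteq K$ a compact symmetric generating set of $G$. The graph $X$, with edges $\{gK, gsK\}$ for $s \in \Sigma$, is a connected locally finite vertex-transitive graph, quasi-isometric to $G$; in particular $X$ has the same number of ends as $G$ and is amenable as a graph if and only if $G$ is amenable.

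The main technical tool I would develop is a lifting construction. Fix a clopen section $s \colon X \to G$ of the quotient, which exists because $X$ is discrete; every $g \in G$ decomposes uniquely as $g = s(gK) k_g$ with $k_g \in K$. Given a free, bounded-displacement action of $\Gamma \in \{\Z, F_d\}$ on $X$ with finitely many orbits, I would set
\[
\gamma \cdot g \;:=\; s(\gamma \cdot gK)\, k_g.
\]
This action is clopen and free; it is translation-like by the computation
\[
g^{-1}(\gamma \cdot g) \;=\; k_g^{-1} s(gK)^{-1} s(\gamma \cdot gK)\, k_g \;\in\; K\,\Sigma^n\,K \;\subseteq\; \Sigma^{n+2},
\]
with $n = d_X(gK, \gamma \cdot gK)$ uniformly bounded in $g$ by translation-likeness on $X$. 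Picking $\Gamma$-orbit representatives $x_1, \ldots, x_r$ on $X$, the compact set $L = \bigcup_i s(x_i) K$ satisfies $\Gamma \cdot L = G$, giving cocompactness.

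Modulo this lift, the reverse implications reduce to producing the required $\Gamma$-actions on $X$ with finitely many orbits: a translation-like $\Z$-action when $X$ is one- or two-ended, and a translation-like $F_d$-action when $X$ is non-amenable. The natural route is to invoke a vertex-transitive-graph version of Seward's Theorem~\ref{SewardTransitive}; his Cayley-graph arguments should adapt by substituting the transitive $\Aut(X)$-action for the regular action of a finitely generated group on its Cayley graph. I expect this adaptation, and in particular the construction of a bi-infinite Hamiltonian-type structure on $X$ realizing the $\Z$- or $F_d$-orbit pattern, to be the principal technical work of the proof.

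The forward direction of (2) is immediate from Schneider's Theorem~\ref{Schneider}, since any translation-like $F_d$-action (cocompact or not) already forces non-amenability. For the forward direction of (1), the argument would transport the cocompact translation-like $\Z$-action on $G$ into a corresponding structure on $X$ via the quasi-isometry, and then apply the end-counting obstruction underlying Seward's forward direction to the vertex-transitive graph $X$: bi-infinite bounded-step sequences that collectively cover $X$ up to bounded distance rule out the infinitely-ended case. Combined with non-compactness of $G$ and the Hopf--Abels dichotomy that compactly generated locally compact groups have $0$, $1$, $2$, or infinitely many ends, this forces $G$ to be one- or two-ended.
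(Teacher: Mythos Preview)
Your lifting construction has a genuine gap: the map $\gamma\cdot g := s(\gamma\cdot gK)\,k_g$ is \emph{not} a clopen piecewise-translation in the sense of Definition~\ref{Def:Piecewise-translation}. On a fixed coset $c=gK$ your map sends $s(c)k\mapsto s(\gamma c)k$, which is \emph{left} multiplication by $s(\gamma c)s(c)^{-1}$. The right-translating element at the point $h=s(c)k$ is therefore
\[
h^{-1}(\gamma\cdot h)=k^{-1}\bigl(s(c)^{-1}s(\gamma c)\bigr)k,
\]
and as $k$ runs over $K$ this ranges over an entire $K$-conjugacy class. Your bound $g^{-1}(\gamma\cdot g)\in K\Sigma^nK$ only confines these elements to a compact set, whereas Definition~\ref{Def:Piecewise-translation} requires a \emph{finite} partition with a single right-translation on each piece. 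In general that conjugacy class is infinite (whenever $K$ is non-discrete and the element is not central for $K$), so your lift lands neither in $\clopen{G}$ nor in $\borel{G}$.

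This is exactly the obstacle the paper handles with Proposition~\ref{CayleyAbelsLift} and Lemma~\ref{Refinement}: moving one $K$-coset to another by a clopen piecewise-\emph{right}-translation forces $[K:K\cap s^{-1}Ks]=[K:sKs^{-1}\cap K]$, and this equality of indices is precisely where unimodularity enters (note that the theorem, as restated and proved in Section~4, carries the extra hypothesis that $G$ be unimodular). Your proposal is implicitly trying to dispense with that hypothesis, and the failure of the lift is exactly the obstruction. The remaining ingredients---producing transitive TL-actions of $\Z$ or $F_d$ on the Cayley--Abels graph and the end-counting for the forward direction of~(1)---are in the same spirit as the paper, though for the $F_d$ case the paper gives an explicit route (Schneider's covering of $X$ by a $k$-regular forest, extension to a spanning tree of valency~$\ge3$, then Seward's bilipschitz classification of bounded-valency trees) rather than appealing to an unproved vertex-transitive analogue of Theorem~\ref{SewardTransitive}.
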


\medskip 

\noindent \textbf{Acknowledgments.} The authors would like to thank Pierre-Emmanuel Caprace, Nicolas Monod, Stefaan Vaes and George Willis for interesting discussions. They also thank Colin Reid for a decisive discussion about local ellipticity. 

The first named author also benefited from the hospitality and knowledge of Michael Cowling during a stay at the University of New South Wales, Australia.


\section{Preliminaries on LC-groups and TL-actions}\label{Preliminaries}

By convention, locally compact groups are Hausdorff and abreviated LC-groups.
\subsection{Translation-like actions}
\begin{definition}\label{def:metricwobbling} Let $X$ be a metric space. Recall from the introduction that the \emph{wobbling group} of $X$, denoted by $\wobbling{X}$, is the group of all bijections $\varphi$ of $X$ satisfying 
\[ \exists C>0, \,\forall x\in X,\, d_X(x,\varphi(x))\le C.\]

An action of a group $G$ on $X$ is called \emph{translation-like} (TL-action below) if the group acts freely by wobbling transformations.

\end{definition}

The following example motivates the terminology of translation-like and establishes the idea that TL-actions generalize subgroup containment.

\begin{example} Let $G$ be a group equipped with a left-invariant metric $d$. Right translations of the form $x\mapsto xg$ are wobbling transformations. Indeed, $d(x,xg)=d(1,g)$ is constant.

It follows that the right action of any subgroup $H<G$ is a TL-action.
\end{example}

Although locally compact groups can be equipped with metrics which behave nicely with respect to their topology, generalising Definition \ref{def:metricwobbling} in a straightforward way would not give a good notion of TL-action for these groups. Instead of defining them in term of a metric, Schneider first made the following observation:

\begin{example} Let $G$ be a group equipped with a proper (i.e. such that all balls are finite) left-invariant metric. This is in particular the case when $G$ is finitely generated and equipped with the word-metric associated to a finite generating set, but such metrics exist on any countable group. Then for any wobbling transformation $\varphi\in\wobbling{G}$, there exist a finite subset $F\subset G$ such that
\[\varphi(x)\in xF,\, \forall x\in G.\] 

In other words, there exist a partition $\{P_g\}_{g\in F}$ of $G$ such that for all $g\in F$, $\varphi(x)=xg$ for all $x\in P_g$. 
\end{example}

This observation allows to see wobbling transformation as piecewise-(right )translations and serves as a motivation for the following definitions:

\begin{definition}\label{Def:Piecewise-translation} Let $G$ be a locally compact group and let $\alpha:G\to G$ be a bijection such that there exists a finite partition $\mathscr P$ of $G$ with
\[\forall P\in\mathscr P, \exists g_P: \forall x\in P, \alpha(x)=xg_P.\]
\begin{enumerate}
\item If $\mathscr P$ is a partition of $G$ into Borel subsets, we call $\alpha$ a \emph{Borel piecewise-translation} and we denote by  $\borel{G}$ the group of all such bijections.
\item If $\mathscr P$ is a partition of $G$ into clopen subsets, we call $\alpha$ a \emph{clopen piecewise-translation} and we denote by  $\clopen{G}$ the group of all such bijections.
\item If $\varphi:H\to\borel{G}$ (resp. $\clopen{G}$) is a free action of $H$ onto $G$ which admits a measurable fundamental domain, we call $\varphi$ a \emph{Borel (resp. clopen) translation-like action}.
\end{enumerate}
\end{definition}
\begin{remark} Observe that when $G$ is endowed with a right-invariant Haar measure, any Borel piecewise-translation is a measure preserving transformation. Similarly, any clopen piecewise translation is a homeomorphism of $G$ and in the particular case of $G$ being connected, we actually have that $\clopen{G}\cong G$. 
\end{remark}
\begin{remark} Beware that contrary to Schneider, we define piecewise-translations using right translations. This is purely a matter of conventions, but we chose to be in accordance with Whyte's and Seward's choice. Accordingly, we need to use a right-invariant Haar measure. 
\end{remark}

A result of Feldman and Greenleaf (\cite{FeldmanGreenleaf}) asserts that any closed subgroup of a LC-group admits a measurable cross-section. This shows that the right action of a closed subgroup $H$ of $G$ induces a clopen TL-action of $H$ onto $G$. Without requiring the existence of a measurable fundamental domain in the definition of a Borel (or clopen) translation-like action, this would be true of any subgroup of $G$ and Theorem \ref{Schneider} or Theorem \ref{ThmMain} would fail due to the existence of elements of infinite order and free subgroups in compact (hence topologically amenable) groups. 

\subsection{The almost connected case}
This paragraph treats the case of an \emph{almost connected} LC-group $G$, that is a connected-by-compact LC-group. The arguments presented below go back to Iwasawa \cite{Iwasawa}, Rickert \cite{Rickert, Rickert2}, and the solution to Hilbert's fifth problem. The main ingredients are essentially the Levi decomposition and the fact that a connected semisimple Lie group is amenable if and only if it is compact.

\begin{theorem}[Almost connected case] \label{thm:almostconnected} Let $G$ be an almost connected LC-group. If $G$ is non-compact, it contains a discrete infinite cyclic subgroup. 
\end{theorem}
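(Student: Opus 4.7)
The plan is to reduce the question to connected Lie groups via the Gleason--Yamabe approximation theorem, use Lie-theoretic structure theory to produce a closed one-parameter subgroup in the Lie quotient, and then lift a discrete cyclic subgroup back to $G$.

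First I would pass to the identity component: since $G$ is almost connected, $G^0$ is cocompact in $G$, so if $G$ is non-compact then so is $G^0$, and replacing $G$ by $G^0$ we may assume $G$ is connected. By the solution of Hilbert's fifth problem (Gleason--Montgomery--Zippin--Yamabe), a connected LC-group admits arbitrarily small compact normal subgroups $N$ such that $G/N$ is a connected Lie group. Fix any such $N$; since $N$ is compact and $G$ non-compact, $H:=G/N$ is a non-compact connected Lie group.

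Within $H$ I would next exhibit a closed subgroup isomorphic to $\R$. It suffices to produce $X\in\mathfrak{h}$ for which $\overline{\exp(\R X)}$ is non-compact, because then this closure is a connected abelian Lie subgroup, isomorphic to $\R^{a}\times\mathbf{T}^{b}$ with $a\ge 1$, and contains a closed copy of $\R$. The existence of such $X$ is the substantive Lie-theoretic content. Using the Levi decomposition $H=R\cdot S$, either the semisimple Levi factor $S$ is non-compact, in which case the $A$-factor of the Iwasawa decomposition $S=KAN$ is a non-trivial closed vector subgroup, or $S$ is compact and the solvable radical $R$ is non-compact, in which case one descends the derived series to extract a closed $\R$ from a non-compact connected abelian Lie quotient and then lifts it through successive compact-by-$\R$ extensions by taking closures of one-parameter subgroups. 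This is essentially the ingredient extracted from Rickert's work.

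Finally, I would lift a discrete $\Z$ from $H$ back to $G$. Let $\pi\colon G\to H$ be the quotient map and pick $h\in H$ generating a discrete infinite cyclic subgroup of the closed $\R\subset H$ just constructed. For any lift $g\in\pi^{-1}(h)$, set $A:=\overline{\langle g\rangle}$; it is a closed abelian subgroup of $G$ fitting into
\[1\longrightarrow A\cap N\longrightarrow A\longrightarrow \langle h\rangle\longrightarrow 1,\]
with $A\cap N$ compact (closed in the compact $N$) and $\langle h\rangle\cong\Z$. Freeness of $\Z$ supplies an algebraic section, which is automatically continuous because $\Z$ is discrete; its image is discrete in $A$ (a neighbourhood of $e$ contained in $A\cap N$ meets the image only at $e$) and closed in $G$, yielding the desired discrete infinite cyclic subgroup. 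The main obstacle is the middle step, namely producing a closed $\R$-subgroup in a connected non-compact Lie group; the surrounding reductions (passing through $G^{0}$ and $G/N$, and splitting a compact-by-$\Z$ extension by freeness of $\Z$) are essentially formal.
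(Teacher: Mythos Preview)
Your proof is correct and follows the same overall architecture as the paper's: pass to a connected Lie quotient by a compact normal subgroup via Hilbert's fifth problem, split into cases according to whether the semisimple part is compact, and handle the residual solvable case by producing an $\R$-subquotient. The differences are only in implementation. For the non-compact semisimple case the paper invokes Rickert's theorem that a non-amenable connected Lie group contains a discrete copy of $F_2$, whereas you extract a closed vector subgroup directly from the Iwasawa $KAN$ decomposition; your route is more self-contained here. For the solvable case the paper cites Iwasawa's Theorem~19 as a black box yielding closed subgroups $N\normal H$ with $H/N\cong\R$, while you sketch the derived-series descent by hand; your phrase ``successive compact-by-$\R$ extensions'' is slightly loose (one only needs the single extension at the last non-compact term $R^{(i)}$, with compact kernel $R^{(i+1)}$, and one should note that closures of derived subgroups are used), but you rightly flag this as the substantive borrowed ingredient. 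Finally, your lifting from $H=G/N$ back to $G$ is more carefully argued than the paper's one-line assertion; in fact it can be shortened: since the quotient map is open, the preimage of any neighbourhood isolating $e$ in $\langle h\rangle$ already isolates $e$ in $\langle g\rangle$, so the passage through $A=\overline{\langle g\rangle}$ is unnecessary.
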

\begin{proof}Suppose $G$ is non-compact. By the structure of locally compact groups, see for instance \cite[Theorem 2.E.14]{CornulierdelaHarpe}, there is a compact normal subgroup $K<G$ such that $G/K$ is a Lie group with finitely many connected components. Therefore, we may assume that $G$ is a connected Lie group. Let $R$ denote the (solvable) radical of $G$, that is the maximal solvable normal subgroup of $G$. It is a closed subgroup of $G$ and, according to \cite[Theorem 5.3]{Rickert2}, $G$ is amenable if and only if $G/R$ is compact. In fact, non-amenability of $G$ is equivalent to the existence of a discrete free subgroup $F_2<G$ by \cite[Theorem 5.5]{Rickert2}. So we may assume that $G/R$ is compact (in which case $G$ is called a (C)-group~\cite{Iwasawa}). Since $R$ is not compact by assumption, it has a subquotient $H/N$ isomorphic to~$\R$ by \cite[Theorem 19]{Iwasawa}, where $N\normal H<G$ are closed subgroups. Therefore $H/N$ has a discrete infinite cyclic subgroup which lifts to a discrete $\Z<H<G$ as desired.
\end{proof}

\begin{proof}[Proof of Proposition \ref{Dichotomy}] Let $G$ be a locally compact group containing no compact open subgroup. By van Dantzig's theorem, $G$ contains an open subgroup $H<G$ which is almost connected and non-compact by assumption. The previous theorem implies that $H$ contains a discrete subgroup~$\Z$.
\end{proof}

\subsection{Cayley-Abels graphs}
In the presence of compact open subgroups, compactly generated groups enjoy nice analogues of Cayley graphs, namely \emph{Cayley-Abels} graphs. This will allow us to rely on the results of Seward \cite{Seward} characterizing the existence of translation-like action of $\Z$ on infinite graphs of bounded valency. We are not interested in the precise way a compact open subgroup gives raise to a graph but rather in their existence and properties.

\begin{proposition}[{\cite[Proposition 2.E.9]{CornulierdelaHarpe}}] \label{prop:2E9} For a LC-group $G$, the following are equivalent:
\begin{enumerate}[(i)]
\item $G$ is compactly generated and has a compact open subgroup.
\item There exists a connected graph $X$ of bounded valency and a continuous action of $G$ on $X$ which is proper and vertex-transitive.
\end{enumerate}

Such a graph $X$ is called a Cayley-Abels graph of $G$.
\end{proposition}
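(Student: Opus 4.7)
The statement is an equivalence between an algebraic condition on $G$ and a combinatorial-topological one. In each direction there is an obvious candidate: the coset space $G/K$ with an edge set coming from a compact symmetric generating set in the forward direction, and the vertex stabiliser of a base point in the backward direction. I would prove the two implications separately, using only the compactness of $K$, the openness of $K$, and the connectedness/bounded valency of $X$ as the four key hypotheses to be exchanged.

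\textbf{From (i) to (ii).} Let $K$ be a compact open subgroup of $G$ and let $\Omega\subset G$ be a compact symmetric generating set containing $K$. Form $S:=K\Omega K$, which is compact, symmetric, and a union of $K$-double cosets. Since right $K$-cosets are open and cover $S$, finitely many of them suffice, so $S$ decomposes into finitely many double cosets $K s_1 K,\dots, K s_n K$, each in turn a finite union of right $K$-cosets. Define $X$ to have vertex set $G/K$ and declare $gK\sim g'K$ whenever $g^{-1}g'\in S\setminus K$; this adjacency is well-defined and symmetric because $S$ is a union of double cosets and is symmetric. Left translation gives a vertex-transitive $G$-action. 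The valency at $gK$ equals the (finite) number of right $K$-cosets in $S\setminus K$, hence is bounded. Connectedness of $X$ follows from $\Omega\subset S$ generating $G$. Finally, $X$ is discrete in the quotient topology since $K$ is open, which makes the $G$-action continuous, and vertex stabilisers are conjugates of $K$ and hence compact, so the action is proper.

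\textbf{From (ii) to (i).} Conversely, fix a vertex $v_0\in X$ and set $K=\Stab_G(v_0)$. Properness of the action forces $K$ to be compact, while continuity together with $X$ being discrete forces $K$ to be open (its preimage under the orbit map $g\mapsto gv_0$ is $\{v_0\}$). Bounded valency at $v_0$ lets us pick, for each neighbour $w$ of $v_0$, an element $g_w\in G$ with $g_wv_0=w$; let $F$ be the resulting finite set. Then $K\cup F$ is compact, and I claim it generates $G$. Given $g\in G$, apply connectedness of $X$ to choose a path $v_0=x_0,x_1,\dots,x_n=gv_0$. Lift the path by picking $h_i\in G$ with $h_iv_0=x_i$, $h_0=e$, $h_n=g$. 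For each $i$, the vertex $h_i^{-1}h_{i+1}v_0$ is a neighbour of $v_0$, so $h_i^{-1}h_{i+1}=g_{w_i}k_i$ for some $g_{w_i}\in F$ and $k_i\in K$. Induction then exhibits $g=h_n$ as a product of elements in $K\cup F$.

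\textbf{Main obstacle.} The only delicate point is the interplay between ``compact'' and ``open''. In the forward direction, it is precisely this interplay that makes $K\Omega K$ decompose into finitely many double cosets, and therefore controls the valency of $X$; in the backward direction, it is used implicitly when turning each edge of a path in $X$ into multiplication by an element of the finite set $F$ up to a stabiliser element in $K$. Once this finiteness principle is in hand, the remainder of the argument is a direct dictionary between the algebra of $G$ and the combinatorics of $X$.
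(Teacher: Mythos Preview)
The paper does not prove this proposition; it is quoted verbatim from Cornulier--de la Harpe and used as a black box, so there is no in-paper argument to compare against. Your proof is correct and is essentially the standard construction found in that reference: in one direction build the coset graph on $G/K$ with edges coming from a compact symmetric $K$-biinvariant set $S=K\Omega K$ (the compact-open interplay forcing $S$ to be a finite union of cosets, hence bounded valency), and in the other direction recover $K$ as the stabiliser of a base vertex and complete it to a compact generating set using one lift per neighbour.

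One cosmetic remark: in the backward direction you implicitly use that the $G$-action is by graph automorphisms when you assert that $h_i^{-1}h_{i+1}v_0$ is a neighbour of $v_0$; this is of course part of what ``action on $X$'' means, but it is worth making explicit since it is exactly what turns an edge of the path into an element of $FK$.
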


In this situation there is a good notion of ends for $G$. Any two Cayley-Abels graphs of $G$ are quasi-isometric since they both admit proper cocompact action of $G$. We can then define the number of ends $e(G)$ of $G$ as the number of ends of its $Cayley-Abels$ graphs. With this definition and in perfect analogy to the discrete case,a group has either 0 (in which case it is compact),1,2 or infinitely many ends. When $e(G)=\infty$, an analogue of Stalling's splitting theorem is available and the group always splits as either an amalgamated free product over a compact subgroup or as an HNN-extension over a compact subgroup (see \cite[Section 4.D.]{Cornulier}).

The next proposition shows that if $G$ is unimodular, then any translation-like action of a free group on $X$ lifts to a clopen TL-action on $G$. The lifted action depends on some choices and is certainly not unique.

%

\begin{proposition}\label{CayleyAbelsLift} Suppose that a LC-group $G$ admits a Cayley-Abels graph $X$. If $G$ is unimodular, then any translation-like action of a free group $F_d$ on $d\ge1$ generators on $X$ lifts to a clopen TL-action of $\Gamma$ onto $G$ with a clopen fundamental domain.
\end{proposition}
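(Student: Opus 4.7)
The plan is as follows. By Proposition~\ref{prop:2E9}, identify $X$ with the coset space $G/K$ for a compact open subgroup $K\le G$, and let $\pi\colon G\to X$ denote the canonical projection. Given the TL-action $\alpha\colon F_d\to\wobbling{X}$, the goal is to lift each generator $s_i$ of $F_d$ to a clopen piecewise translation $\tilde\phi_i\in\clopen{G}$ in such a way that $\pi\circ\tilde\phi_i = \alpha(s_i)\circ\pi$. Since $\clopen{G}$ is closed under composition and $F_d$ is free, the assignment $s_i\mapsto\tilde\phi_i$ extends by the universal property to a homomorphism $\tilde\alpha\colon F_d\to\clopen{G}$, and the relation $\pi\circ\tilde\alpha(w)=\alpha(w)\circ\pi$ propagates to every $w\in F_d$.

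For a fixed generator, set $\phi_i := \alpha(s_i)$ and decompose $X = A_{i,1}\sqcup\dots\sqcup A_{i,n_i}$ with elements $h_{i,j}\in G$ satisfying $\phi_i(xK) = xh_{i,j}K$ on $A_{i,j}$. The preimages $B_{i,j} := \pi^{-1}(A_{i,j})$ form a finite clopen partition of $G$. The naive rule $x\mapsto xh_{i,j}$ on $B_{i,j}$ is a clopen piecewise translation commuting with $\pi$, but it sends the $K$-coset $xK$ to $xKh_{i,j}$, which is only a proper subset of the target coset $xh_{i,j}K$ whenever $h_{i,j}\notin N_G(K)$, so it is in general not bijective. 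To correct this I introduce the compact open subgroup $L_{i,j} := K\cap h_{i,j}^{-1}Kh_{i,j}$. This is where unimodularity enters decisively: $\mu(K)=\mu(h_{i,j}Kh_{i,j}^{-1})$ forces the index equality $[K:L_{i,j}] = [h_{i,j}Kh_{i,j}^{-1}:L_{i,j}]$. Using a fixed set of representatives of $K/L_{i,j}$, I subdivide each $K$-coset inside $B_{i,j}$ into its equally many left $L_{i,j}$-cosets and match them, via a finite list of right-translations $g_{i,j,m}$ depending only on the indices $(i,j,m)$, to the left $(h_{i,j}L_{i,j}h_{i,j}^{-1})$-cosets inside $xh_{i,j}K$. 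The outcome is a bijective $\tilde\phi_i\in\clopen{G}$ with finitely many clopen pieces that still projects to $\phi_i$ under $\pi$.

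Freeness of $\tilde\alpha$ is then automatic: for $w\in F_d\setminus\{e\}$, freeness of $\alpha$ on $X$ gives $\alpha(w)(xK)\neq xK$ for every $xK\in X$, and the intertwining property forces $\tilde\alpha(w)(x)\notin xK$, hence $\tilde\alpha(w)(x)\neq x$. For a clopen fundamental domain, recall that $X$ is a countable discrete set; the axiom of choice provides a transversal $D\subset X$ for the $F_d$-action meeting each orbit exactly once, and $\tilde D := \pi^{-1}(D)$ is clopen in $G$. Any $F_d$-orbit in $G$ projects via $\pi$ onto an $F_d$-orbit in $X$, which meets $D$ in a unique point $y_0$; the corresponding element $\tilde\alpha(w_0)(x)$ of the $G$-orbit lies in $\pi^{-1}(y_0)\subset\tilde D$, and freeness of $\alpha$ excludes any other element of the orbit projecting to $D$. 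Thus $\tilde D$ meets each $F_d$-orbit in $G$ exactly once.

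The main obstacle is the construction of the bijective lift $\tilde\phi_i$, which requires both the refinement of the clopen partition by cosets of $L_{i,j}$ and a careful assembly of the sub-translations $g_{i,j,m}$. Unimodularity is indispensable in this step, since the combinatorial matching of source and target sub-pieces rests on the index equality $[K:L_{i,j}] = [h_{i,j}Kh_{i,j}^{-1}:L_{i,j}]$; the remaining verifications of freeness and of the fundamental domain are then purely formal consequences of the intertwining with $\pi$.
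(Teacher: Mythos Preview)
Your approach is essentially the paper's: identify $X=G/K$, lift each free generator of $F_d$ to an element of $\clopen{G}$ covering the given wobbling bijection, extend by the universal property of $F_d$, and read off freeness and the clopen fundamental domain from the intertwining with $\pi$. The paper organises the heart of the construction into a separate lemma producing, for each $s\in G$, a clopen piecewise-translation $\varphi_s\colon K\to sK$; your inline version with the subgroups $L_{i,j}=K\cap h_{i,j}^{-1}Kh_{i,j}$ is the same idea.

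One slip to fix: the index equality you write, $[K:L_{i,j}]=[h_{i,j}Kh_{i,j}^{-1}:L_{i,j}]$, is ill-posed because $L_{i,j}\le h_{i,j}^{-1}Kh_{i,j}$, not $h_{i,j}Kh_{i,j}^{-1}$. The equality you actually need (and implicitly use in the next sentence) is $[K:L_{i,j}]=[K:h_{i,j}L_{i,j}h_{i,j}^{-1}]$, which follows from unimodularity since $L_{i,j}$ and $h_{i,j}L_{i,j}h_{i,j}^{-1}=K\cap h_{i,j}Kh_{i,j}^{-1}$ are conjugate compact open subgroups of $K$. Relatedly, the matching is cleaner if you partition $K$ into \emph{right} cosets $(h_{i,j}L_{i,j}h_{i,j}^{-1})u_m$ and send each to $(h_{i,j}L_{i,j}h_{i,j}^{-1})h_{i,j}r_m\subset h_{i,j}K$ by the right translation $u_m^{-1}h_{i,j}r_m$; your ``left $L_{i,j}$-cosets to left $h_{i,j}L_{i,j}h_{i,j}^{-1}$-cosets'' phrasing does not quite survive right-multiplication by varying $g_{i,j,m}$. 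These are bookkeeping corrections, not conceptual gaps.
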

\begin{proof} Let $K$ denote the stabiliser of a vertex in $G$; since the action is continuous and proper, $K$ is a compact and open subgroup of~$G$. Since $X$ is homogeneous under $G$, we can identify the vertices of $X$ with the quotient $G/K$, the set of left cosets of $K$. Moreover, there exists a finite subset $S\subset G$ such that the neighbors of a coset $gK$ in $X$ are exactly $gsK$ for $s\in S$. In particular, the disjoint union $S\sqcup K$ generates $G$ and we may assume that any coset is of the form $gK$ for some $g$ in $\bra S \ket$, the subgroup generated by~$S$. Therefore, for any wobbling bijection $\alpha\in\wobbling{X}$ there exists a finite subset $T\subset S^n$ such that \[\forall g\in G,\exists s\in T: \alpha(gK)=gsK.\] Here $n$ is an integer bounding the distances $d(\alpha(gK),gK)$.

Suppose that for every $s\in T$, at the level of~$G$, there exists a clopen piecewise-(right)\-translation $\varphi_s$ mapping $K$ to $sK$ and define a map $\tilde\alpha:G\to G$ by 
\[\tilde\alpha\mid_{gK}=(\lambda_g\circ \varphi_s\circ\lambda_{g^{-1}})|_{gK} \text{ when }\alpha(gK)=gsK.\]

Observe that $\tilde\alpha$ is indeed a clopen piecewise-translation and that it factors through the quotient map $G\to G/K$ to $\alpha$.
Applying this procedure to a free generating set of $F_d$ the free action of $F_d$ on $X$ uniquely lifts to a free action on $G$. Note that both the fact that $G$ is free and that the action is free are necessary conditions for this to be true in general. Moreover, the preimage under the quotient of a fundamental domain $X_0\subset X$ is a fundamental domain for the action on $G$ and is clopen as the union of a family of cosets of the clopen subgroup $K$. The next lemma takes care of implementing  the maps $\varphi_s$.

\end{proof}

\begin{lemma}\label{Refinement} let $K$ be a compact open subgroup of a locally compact unimodular group $G$ and $s\in G$. Then there exists a (locally defined) clopen piecewise-translation $\varphi_s$ such that $\varphi_s(K)=sK$. 
\end{lemma}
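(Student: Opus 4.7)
The plan is to write both $K$ and $sK$ as disjoint unions of the \emph{same} number of right cosets of a well-chosen compact open subgroup $L$, and then define $\varphi_s$ piecewise by right translation matching those cosets. The natural choice is
\[
L \;:=\; K \cap sKs^{-1},
\]
which is a compact open subgroup of $G$ (being the intersection of two such) sitting inside both $K$ and $sKs^{-1}$.

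The key step is a measure count. Since $G$ is unimodular, if $\mu$ is any Haar measure then left-invariance gives $\mu(sKs^{-1}) = \mu(Ks^{-1})$, and unimodularity gives $\mu(Ks^{-1}) = \mu(K)$. Consequently
\[
m \;:=\; [K:L] \;=\; [sKs^{-1}:L].
\]
Pick right coset representatives $K = \bigsqcup_{i=1}^{m} L k_i$ and $sKs^{-1} = \bigsqcup_{i=1}^{m} L h_i$. Multiplying the second decomposition on the right by $s$ gives $sK = \bigsqcup_{i=1}^{m} L h_i s$, where each piece $L h_i s$ is contained in $sK$ because $L \subseteq sKs^{-1}$.

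I would then define $\varphi_s$ on each piece $Lk_i$ by right multiplication by $t_i := k_i^{-1} h_i s$, so that $\varphi_s(L k_i) = L k_i \cdot t_i = L h_i s$. Gluing over $i = 1, \dots, m$ yields a bijection $\varphi_s\colon K \to sK$. Each coset $L k_i$ is clopen because $L$ is a compact open subgroup, so the partition of $K$ into the $L k_i$ is a finite clopen partition on which $\varphi_s$ acts as right translation by a single element; hence $\varphi_s$ is a clopen piecewise-translation in the sense of Definition \ref{Def:Piecewise-translation}, defined locally on $K$ as required by Proposition \ref{CayleyAbelsLift}.

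The only non-trivial input is the equality $[K:L] = [sKs^{-1}:L]$, which is exactly where unimodularity is used: without it the Haar measures of $K$ and $sKs^{-1}$ could differ, so $K$ and $sK$ would be tiled by inequivalent numbers of $L$-cosets, and no pairwise matching could produce a bijective piecewise right translation. Everything else reduces to standard coset bookkeeping in a compact set partitioned by an open subgroup.
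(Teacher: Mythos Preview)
Your proof is correct and follows essentially the same approach as the paper: partition $K$ and $sK$ into equal numbers of cosets of a compact open subgroup and match them by right translations, using unimodularity for the index count. The only cosmetic difference is bookkeeping: the paper decomposes $K$ twice, once by right cosets of $K\cap s^{-1}Ks$ and once by right cosets of $sKs^{-1}\cap K$, and uses that these two subgroups are conjugate by $s$; you instead fix a single $L=K\cap sKs^{-1}$ and decompose $K$ and $sKs^{-1}$ by $L$-cosets, then right-multiply the second decomposition by $s$. Both arguments implement the same idea.
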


\begin{proof}
Observe that $K\cap s^{-1}Ks$ and $sKs^{-1}\cap K$ are compact open subgroups of $K$, conjugated in $G$. Since $G$ is unimodular they have the same Haar measure and hence the same index in~$K$. Since $K$ is compact, the index is finite and we can write
\[K=\bigsqcup_{i=1}^n\left(K\cap s^{-1}Ks\right)r_i=\bigsqcup_{i=1}^n\left(sKs^{-1}\cap K\right)u_i\]
for some choice of representatives $r_1,\ldots,r_n,u_1,\ldots,u_n\in K$.
We get
\begin{eqnarray*}
sK&=&\bigsqcup_{i=1}^ns\left(K\cap s^{-1}Ks\right)r_i=\bigsqcup_{i=1}^n\left(sK\cap Ks\right)r_i\\
&=&\bigsqcup_{i=1}^n\left(sKs^{-1}\cap K\right)sr_i=\bigsqcup_{i=1}^n\left(sKs^{-1}\cap K\right)u_iu_i^{-1}sr_i\\
\end{eqnarray*}
Now, defining $\varphi_s$ by $\varphi_s(x)=xu_i^{-1}sr_i$ when $x\in \left(sKs^{-1}\cap K\right)u_i$, we get the desired result.
\end{proof}

%

\section{Proof of the main theorem}

We now have all the ingredients to prove Theorem \ref{ThmMain}

\begin{proof}[Proof of Theorem \ref{ThmMain}]~\\
(2)$\Rightarrow$(1) : Since any clopen TL-action is a Borel TL-action we only need to consider the latter case. Let $\varphi\in\borel{G}$ be the Borel piecewise-translation generating the action and let $D$ be a measurable fundamental domain. Also, let $\mu$ be a right-invariant Haar measure on $G$. Since the action is free, the sets $\varphi^{(n)}(D)$, $n\in\Z$ form a partition of $G$ and since $\varphi$ is measure-preserving, we get
\[\mu(G)=\sum_{n\in\Z}\mu(\varphi^{(n)}(D))=\sum_{n\in\Z}\mu(D).\]
This leads either to $\mu(G)=0$ which is a contradiction or to $\mu(G)=\infty$ which implies that $G$ is non-compact.
 
(1)$\Rightarrow$(2) : Assume that $G$ is non-compact and recall that the existence of a discrete infinite cyclic subgroup in $G$ is a sufficient condition for the existence of a clopen TL-action of $\Z$. 

According to Proposition \ref{Dichotomy}, either $G$ has an infinite discrete cyclic subgroup, and we are done, or $G$ contains a compact open subgroup. We can therefore assume that $G$ contains a compact open subgroup.

If $G$ has infinitely many ends, the analogue of Stallings' splitting theorem tells us that $G$ splits either as an amalgamated free product over a compact subgroup or as an HNN extension over a compact subgroup. In both cases, $G$ contains free discrete subgroups and hence contains discrete copies of $\Z$. 

If $G$ has one or two ends ($G$ has at least one end since $G$ is non-compact), we consider two cases. When $G$ is not unimodular, the modular function $\Delta$ is a non-trivial continuous homomorphism from $G$ to the multiplicative group $\R_{>0}$. Any $x\in \R_{>0}$ then lifts to an element $g\in G$ generating a discrete copy of $\Z$. 

If $G$ is unimodular, consider a Cayley Abels graph $X$ associated to $G$. $X$ has either one or two ends and according to \cite[Corollary 3.4]{Seward}, $X$ admits a transitive translation-like action by $\Z$. Finally, Proposition \ref{CayleyAbelsLift} allows us to lift this action to a  clopen TL-action of $\Z$ onto $G$.

\end{proof}

We now prove corollary \ref{ThmLocallyElliptic}

\begin{proof}[Proof of Corollary \ref{ThmLocallyElliptic}]~

\noindent (2)$\Rightarrow$(1): Let $\varphi$ be a Borel piecewise-translation generating a Borel TL-action of $\Z$ on $G$ and let $F=\{x^{-1}\varphi(x)\mid x\in G\}$. By definition of a piecewise-translation, $F$ is a finite set. Now assume, for the sake of contradiction, that $G$ is locally elliptic. Let $H<G$ be a compact subgroup containing $F$. Observe that $H$ is $\varphi$-invariant and therefore $\varphi\mid_H$ defines a Borel TL-action of $\Z$ onto the compact group $H$. This is in contradiction with Theorem \ref{ThmMain}.

\noindent (1)$\Rightarrow$(2) : Suppose that $G$ is not locally elliptic and let $H$ be a non-compact, compactly generated subgroup of $G$. By taking the subgroup generated by a compact neighbourhood of the identity and $H$, we may assume that $H$ is open in $G$. Since $H$ is non-compact, there exists $\varphi\in\clopen{H}$ generating a clopen TL-action on $H$. By choosing a section $r:G/H\to G$ of the projection map $G\to G/H$, we can define an extension $\psi$ of $\varphi$ to $G$ by $\psi(g)=r(g)\varphi(r(g)^{-1}g)$. In other words, if $\mathscr P$ and $g_P,\, P\in\mathscr P$ define $\varphi$ as in Definition \ref{Def:Piecewise-translation}, then $\psi$ is defined by the partition $\mathscr Q=\{r(G/H)P\mid P\in \mathscr P\}$ of $G$ using the same translations $g_P$. We claim that $\psi\in\clopen{G}$ and that it generates a clopen TL-action of $\Z$ on $G$.

Observe that for all $P\in\mathscr P$, $r(G/H)P$ is clopen because each $r(gH)P$ is a clopen subset of the open coset $gH$ showing indeed that $\psi\in\clopen{G}$. Furthermore, if $D$ is a measurable fundamental domain of the action generated by $\varphi$, then $r(G/H)D$ is a fundamental domain of the action generated by $\psi$. It is measurable as the preimage of $D$ under the continuous map $g\mapsto r(g)^{-1}g$. Finally, it is straightforward that the action generated by $\psi$ is free.  Hence $\psi$ generates a clopen TL-action of $\Z$.
\end{proof}

\begin{remark}
In the case of a discrete group, this theorem states that a group admits a translation-like action by $\Z$ if and only if it is not locally finite. This would be very straightforward to prove from Seward's result only. Let us mention that in \cite[Proposition 4.3, 4.5 and Remark 4.6]{Schneider} Schneider already established the fact that a discrete group is not locally finite if and only if there exists a bornologous injection from $\Z$ to $G$. Theorem \ref{ThmLocallyElliptic} can be thought both as a strenghtening of this result to the existence of a translation-like action and a generalization to the topological setting.
\end{remark}

\section{Cocompact TL-actions}

In this section, we prove and discuss Theorem \ref{SewardTransitiveLC}  about the existence of cocompact translation-like actions by $\Z$ and $F_n$, $n\ge2$. In our setting, cocompactness plays the role of transitivity in \cite[Theorem 1.5]{Seward}, and the class of unimodular groups possessing a compact-open subgroup seems to be the largest class in which these reults about discrete groups carry.

\begin{theorem}
Let $G$ be a non-compact unimodular compactly generated group possessing a compact open subgroup. Then
\begin{enumerate}
\item $G$ admits a Borel or clopen translation-like action by $\Z$ if and only if $G$ is at most two-ended.
\item $G$ admits a Borel or clopen translation-like action by $F_d$, for all $d\ge2$ if and only if $G$ is non amenable.
\end{enumerate}
\end{theorem}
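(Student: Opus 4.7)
The plan is to transfer the question to the Cayley-Abels graph $X=G/K$ of $G$ (where $K$ is a compact open subgroup guaranteed by hypothesis) and reduce to Seward's Theorem \ref{SewardTransitive} for bounded-valency graphs. I will use two facts recalled in Section~2: the number of ends of $G$ equals the number of ends of $X$, and $G$ is amenable if and only if $X$ is amenable as a graph.

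For the ``if'' direction of~(1), suppose $G$ is one- or two-ended; then so is $X$. Seward's Theorem~\ref{SewardTransitive}(1) furnishes a transitive translation-like action of $\Z$ on $X$. Since $G$ is unimodular, Proposition~\ref{CayleyAbelsLift} lifts this to a clopen TL-action of $\Z$ on $G$ whose fundamental domain is the preimage of a single vertex of $X$, namely one $K$-coset, which is compact. Hence the lifted action is cocompact. Part~(2)'s ``if'' direction is analogous, using Seward's Theorem~\ref{SewardTransitive}(2) to obtain transitive TL-actions of $F_d$ on $X$ and then lifting them.

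The ``only if'' direction of~(2) is immediate: by Schneider's Theorem~\ref{Schneider}, the existence of \emph{any} TL-action of $F_d$ on $G$, cocompact or not, already forces $G$ to be non-amenable.

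The main obstacle is the ``only if'' direction of~(1). Suppose $\varphi\in\clopen{G}$ generates a cocompact clopen TL-action of $\Z$ on $G$ with compact fundamental domain $D$. Bounded displacement together with cocompactness imply that the $\Z$-action is proper: any compact $A\subset G$ meets only finitely many of the pairwise disjoint sets $\varphi^n(D)$, so $\varphi^n(A)\cap A=\varnothing$ for all but finitely many $n$. Hence $G/\Z$ is compact Hausdorff and $\pi\colon G\to G/\Z$ is a principal $\Z$-bundle. The delicate point is to convert this into a cocompact TL-action of $\Z$ on $X$: the partition defining $\varphi$ need not be invariant under left multiplication by $K$, so $\varphi$ does not automatically factor through $G\to G/K$. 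I would arrange this by refining the partition of $\varphi$ in the spirit of Lemma~\ref{Refinement}, exploiting unimodularity to adjust the pieces into finite unions of $K$-cosets sharing a common translation, so that the induced action on $X$ is well-defined and remains cocompact. Once this is achieved, Seward's analysis underlying Theorem~\ref{SewardTransitive}(1) --- in its cocompact rather than transitive version, an extension that should present no new difficulty --- forces $X$ to have at most two ends, which is the desired conclusion. The Borel case reduces to the clopen one by the same type of refinement, mirroring the Borel--clopen equivalence already exploited in Theorems~\ref{ThmMain} and~\ref{Schneider}.
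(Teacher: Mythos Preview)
Your ``if'' direction for (1) and your ``only if'' direction for (2) match the paper, but the other two implications have real gaps.

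\medskip

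\textbf{The ``only if'' direction of (1).} Your plan is to push the piecewise-translation $\varphi$ down to $X=G/K$ and then invoke a cocompact variant of Seward's result on $X$. The descent step is not justified: Lemma~\ref{Refinement} goes the \emph{other} way (it lifts a coset permutation from $X$ to $G$), and there is no reason the clopen partition of $\varphi$ can be refined into one consisting of unions of left $K$-cosets with a single translation on each piece. Unimodularity gives equality of Haar measures of conjugates of $K$, but that alone does not make $\varphi$ left-$K$-equivariant, even after refinement. Your principal-bundle remark is also problematic: in the Borel case $\varphi$ is not a homeomorphism, and even in the clopen case $G/\Z$ need not be Hausdorff.

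The paper bypasses all of this with a direct argument that avoids descending to $X$ altogether. Given a compact set $K$ whose $\varphi$-translates cover $G$ and the finite set $F$ with $\varphi(g)\in gF$, one observes that $K\cup F$ is a compact generating set of $G$, and in the associated Cayley graph each $\varphi$-orbit is a bi-infinite path from which every point lies at uniformly bounded distance. Such an almost-surjective bi-infinite path immediately forces at most two ends. No action on $X$ is needed.

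\medskip

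\textbf{The ``if'' direction of (2).} You say it is ``analogous'' via Seward's Theorem~\ref{SewardTransitive}(2), but that theorem is stated for finitely generated \emph{groups}, and the Cayley--Abels graph $X$ is in general not a Cayley graph of any group. So you cannot invoke it directly to obtain a transitive TL-action of $F_d$ on $X$. The paper fills this gap with extra input: Schneider's results give a bilipschitz embedding of a $k$-regular forest covering $X$, which one extends to a spanning tree of uniformly bounded valency $\ge 3$; then Seward's Theorem~5.5 (a graph-theoretic statement, not Theorem~\ref{SewardTransitive}) identifies this tree bilipschitzly with the Cayley graph of $F_d$, yielding the transitive TL-action on $X$ that can then be lifted via Proposition~\ref{CayleyAbelsLift}. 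The same issue, in milder form, affects your citation for the ``if'' direction of (1): the relevant input is Seward's graph-theoretic Corollary~3.4, not Theorem~\ref{SewardTransitive}(1).
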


\begin{proof}
(1) If $G$ is at most two-ended, the statement is covered by the proof of \ref{ThmMain}. Indeed, the translation-like action of $\Z$ on the Cayley-Abels graph $X$ given by \cite[Corollary 3.4]{Seward} used to produce our clopen translation-like action on $G$ can be chosen transitive. In that case it is easily seen that the compact open subgroup $K$ is a compact fundamental domain of the action.
For the converse direction, let $\varphi\in\borel{G}$ be an element generating a cocompact action of $\Z$ on $G$ and let $F$ be a finite set such that $\varphi(g)\in gF$ for all $g\in G$. Let also $K$ be a compact subset such that the translates $\varphi^{(n)}(K)$ cover $G$. Then $K\cup F$ is a compact generating set of $G$ and the number of ends of $G$ corresponds to the number of ends of the Cayley graph of $G$
\footnote{Be careful that this Cayley graph is not locally finite, so the number of ends is the supremum of the number of connected components of $G\setminus B$ when $B$ runs over all \emph{bounded} subsets of $G$ and not over all finite subsets of $G$. See \cite[8.B.12]{CornulierdelaHarpe}) for a detailed treatment. } with respect to $K\cup F$. In this Cayley graph, any orbit map of the action induces a bi-infinite almost-surjective path, in the sense that every point of the space is at uniformly bounded distance from the path. It follows that the graph has at most as many ends as $\Z$, namely two.

(2) Most of the statement is covered by Schneider's Theorem \ref{Schneider}. We only need to show how the action can be chosen cocompact. To do so, we strongly rely on results and notations in \ref{Schneider}. Let $X$ be a Cayley-Abels graph of $G$ and let $K$ be the stabilizer of a point $x_0\in X$. Since $G$ is non amenable and $K$ is compact, the action of $G$ onto $X$ is non-amenable. It follows from \cite[Prop. 2.1 and Thm 2.2]{Schneider} that the Cayley-Abels graph $X$ is covered by the bilipschitz image of a $k$-regular forest for every $k\ge 3$. By extending this forest into a spanning tree, we get a spanning tree in which all vertices have uniformly bounded valency at least 3. According to \cite[Thm 5.5]{Seward}, this tree is bilipschitz equivalent to a 2d-regular tree. Identifying that 2d-regular tree with the Cayley graph of the free group $F_d$ yields a transitive translation-like action of $F_d$ on the Cayley-Abels graph $X$. Again, by Proposition \ref{CayleyAbelsLift} this lifts to a clopen TL-action on $G$ which is clearly cocompact. 
\end{proof}

\begin{remark}
This theorem cannot be extended in full generality to all locally compact groups. Indeed, if $G$ is a connected group then $\clopen{G}\cong G$ and there exists a clopen TL-action of $\Z$ if and only if $G$ is $\Z$-by-compact. This is equivalent to $G$ having exactly two ends and therefore any one-ended connected group violates the first statement of Theorem \ref{SewardTransitiveLC}.

However we were unable to find groups not admitting Borel TL-actions. In particular, we don't know whether the $ax+b$ group, namely the affine group of the real line admits a cocompact Borel TL-action of $\Z$.
\end{remark}


\bibliography{BibliographyBurnside}
\bibliographystyle{amsalpha}

\end{document}